\RequirePackage{fix-cm}
\documentclass[smallextended]{svjour3}       
\smartqed
\usepackage{graphicx}
\usepackage{mathptmx}

\journalname{Numerical Algorithms}

\usepackage{amssymb}
\usepackage{amsmath}
\usepackage{algorithm}
\usepackage{algpseudocode}
\makeatletter
\newcommand{\algmargin}{\the\ALG@thistlm}
\makeatother
\algnewcommand{\ParState}[1]{\State\parbox[t]{\dimexpr\linewidth-\algmargin}{\strut #1\strut}}
\usepackage{booktabs}
\usepackage{lscape}

\usepackage{mathtools}
\mathtoolsset{showonlyrefs = true}

\usepackage{xcolor}

\DeclareMathOperator*{\argmin}{argmin}
\DeclareMathOperator*{\argmax}{argmax}

\DeclareMathOperator*{\interior}{int}

\newcommand{\calK}{\mathcal{K}}
\newcommand{\bbR}{\mathbb{R}}

\spnewtheorem{assumption}{Assumption}{\bf}{\rm}

\begin{document}
\title{Extension of the LP-Newton method to SOCPs via semi-infinite representation%
	\thanks{This research is supported by JST CREST JPMJCR14D2 and JSPS Grants-in-Aid for Young Scientists 15K15943 and 16K16357. We thank Stuart Jenkinson, PhD, from Edanz Group (www.edanzediting.com/ac) for editing a draft of this manuscript. }
}
\titlerunning{Extension of the LP-Newton method to SOCP}
\author{Takayuki~Okuno \and Mirai~Tanaka}
\institute{T.~Okuno \at
	RIKEN Center for Advanced Intelligence Project,
	1-4-1 Nihonbashi, Chuo, Tokyo 103-0027, JAPAN.\\
	\email{takayuki.okuno.ks@riken.jp}
	\and
	M.~Tanaka \at
	Department of Statistical Inference and Mathematics,
	The Institute of Statistical Mathematics,
	10-3, Midori, Tachikawa, Tokyo 190-8562, JAPAN.
}
\date{Received: date / Accepted: date}

\maketitle

\begin{abstract}
The LP-Newton method solves the linear programming problem (LP)
by repeatedly projecting a current point onto a certain relevant polytope.
In this paper, we extend the algorithmic framework of the LP-Newton method 
to the second-order cone programming problem (SOCP)
via a linear semi-infinite programming (LSIP) reformulation of the given SOCP.
In the extension, we produce a sequence by projection onto polyhedral cones
constructed from LPs obtained by finitely relaxing the LSIP.
We show the global convergence property of the proposed algorithm under mild assumptions,
and investigate its efficiency through numerical experiments
comparing the proposed approach with the primal-dual interior-point method for the SOCP.
\keywords{Second-order cone program \and
	Semi-infinite program \and
	Adaptive polyhedral approximation \and
	LP-Newton method}
\end{abstract}

\section{Introduction}
\label{sect: intro}

In this paper, we consider the following second-order cone programming problem (SOCP):
\begin{equation}
\label{eq:SOCPstd}
\begin{array}{ll}
\text{maximize}&	c^{\top} x\\
\text{subject to}&	A x = b,\\
&					x \in \calK,
\end{array}
\end{equation}
where $A \in \bbR^{m \times n}, b \in \bbR^{m}$, and $c \in \bbR^{n}$
are a given matrix and vectors, and
$\calK$ denotes a Cartesian product of second-order cones (SOCs), i.e.,
$\calK = \calK^{n_{1}} \times \calK^{n_{2}} \times \dots \times \calK^{n_{p}}$
with $\calK^{l}$ being an $l$-dimensional SOC, namely,
\begin{equation}
\calK^{l} :=
\begin{cases}
\left\{(z_{1}, z_{2}, \dots, z_{l}) \in \bbR^{l}: z_{1} \ge \sqrt{\displaystyle \sum_{j = 2}^{l} z_{j}^{2}}\right\}
	&(l \ge 2),\\
\{z \in \bbR: z \ge 0\}
	&(l = 1).
\end{cases}
\end{equation}
If the SOCs in $\calK$ are all one-dimensional,
then the SOCP problem~\eqref{eq:SOCPstd} reduces to the linear programming problem (LP) of the standard form:
\begin{equation}
\label{eq:LPstd}
\begin{array}{ll}
\text{maximize}&	c^{\top} x\\
\text{subject to}&	A x = b,\\
&					x \ge 0.
\end{array}
\end{equation}
SOCP~\eqref{eq:SOCPstd} is a very important optimization model, as it has many practical applications
in fields such as robust optimization, antenna array problems, and beam forming problems~\cite{lobo1998applications}.
To solve the SOCP, many researchers have developed algorithms exploiting the
geometrical or algebraic structure of SOCs.
For instance, we can find Newton-type methods such as primal-dual interior-point methods~\cite{monteiro2000polynomial}
and non-interior continuous methods along with complementarity functions~\cite{hayashi1},
Chubanov-type algorithms~\cite{kitahara2018extension},
and simplex-type algorithms~\cite{hayashi2016simplex,muramatsu2006pivoting}.
These algorithms were originally carried over from LP.

One popular extension from LP to SOCP is based on the Jordan algebra~\cite{faraut1994analysis},
whereby the two problems can be handled in the same algebraic framework.
Another approach is based on the semi-infinite reformulation of the SOCP.
By representing the SOCs as the intersection of an infinite number of half-spaces,
the SOCP can be reformulated as the following linear semi-infinite programming problem (LSIP)
with infinitely many linear inequality constraints:
\begin{equation}
\label{eq:SIP}
\begin{array}{lll}
\text{maximize}&	c^{\top} x\\
\text{subject to}&	A x = b,\\
&					(1, (v^{i})^{\top}) x^{i} \ge 0	&(v^{i} \in V_{i};\ i = 1, 2, \dots, p),
\end{array}
\end{equation}
where $V_{i} := \{v \in \bbR^{n_{i} - 1} \mid \|v\| \le 1\}$ if $n_{i} \ge 2$;
otherwise, the corresponding constraint denotes $x^{i} \ge 0$ by convention,
and $x^{i} \in \bbR^{n_{i}}$ denotes the $i$-th block of $x$ partitioned along the Cartesian structure of $\calK$, i.e.,
$x = ((x^{1})^{\top}, (x^{2})^{\top}, \dots, (x^{p})^{\top})^{\top} \in \prod_{i = 1}^{p} \bbR^{n_{i}}$.
Hayashi et al.~\cite{hayashi2016simplex} tailored the dual-simplex method for LP to the dual problem of SOCP~\eqref{eq:SOCPstd}
via the semi-infinite representation.
For an overview of semi-infinite programming problems, we refer readers to survey articles~\cite{sip1,sip2}.

The purpose of this paper is to extend the LP-Newton method for LP in the standard form~\eqref{eq:LPstd} to SOCP~\eqref{eq:SOCPstd}.
Algorithms for solving LP include the simplex method, ellipsoid method, and interior-point method.
Although the ellipsoid and interior-point methods are polynomial-time algorithms,
the existence of a strongly polynomial-time algorithm for solving LPs remains an open problem.
In an attempt to devise a strongly polynomial-time algorithm for LPs, Fujishige et al.~\cite{FHYZ09} proposed the LP-Newton method for box-constrained LPs,
which have a box constraint~$l \le x \le u$ instead of the nonnegativity constraint in LP~\eqref{eq:LPstd}.
Kitahara et al.~\cite{KMS13} extended this to LPs in the standard form~\eqref{eq:LPstd}.
This algorithm repeats the projection of the current point onto a polytope arising from the feasible region
and the computation of a supporting hyperplane and line.
Numerical results in \cite{FHYZ09} suggest that relatively few iterations of the LP-Newton method are required, 
and hence the algorithm is considered promising.

Recently, Silvestri and Reinelt~\cite{SR17} developed an LP-Newton method for SOCP.
To the best of our knowledge, this is the first extension of the LP-Newton method to SOCP.
In \cite{SR17}, the authors considered SOCP~\eqref{eq:SOCPstd}
with $x \in \calK$ replaced by a box-like constraint $l \preceq x \preceq u$,
which denotes $x - l, u - x \in \calK$. 
Their algorithm computes a projection onto a conic zonotope at each iteration,
and they proposed a Frank--Wolfe-based inner algorithm for this computation.
Nevertheless, the computation of the projection still appears to be difficult.
In fact, their numerical results show that the inner algorithm for obtaining the projection requires a number of iterations,
although the outer loop is repeated relatively few times.

In this paper, we propose a different type of LP-Newton method for SOCP~\eqref{eq:SOCPstd}
based on the semi-infinite reformulation~\eqref{eq:SIP}.
In our approach, we construct a sequence of LPs by
adaptively selecting finitely many constraints from the infinitely many constraints of LSIP~\eqref{eq:SIP}.
To produce an iteration point,
we compute a projection onto a polytope arising from a polyhedral approximation of the SOCs,
which can be realized by solving a convex quadratic programming problem (QP).

The remainder of this paper is organized as follows. 
In Section~\ref{sect: primal}, we describe our proposed LP-Newton method for SOCP~\eqref{eq:SOCPstd}.
In Section~\ref{sect: conv anal}, we establish the global convergence of the proposed algorithm
under the boundedness of the optimal set of SOCP~\eqref{eq:SOCPstd}.
In Section~\ref{sect: dual}, we propose a dual algorithm that generates a sequence in the dual space of SOCP~\eqref{eq:SOCPstd}.
We also show its global convergence to an optimum of the dual problem of SOCP~\eqref{eq:SOCPstd}
under Slater's constraint qualification.
In Section~\ref{sect: expr}, we report numerical results for the proposed method
to investigate its validity and effectiveness.

\section{Primal algorithm}
\label{sect: primal}

In this section,
we extend the LP-Newton method for LP~\eqref{eq:LPstd} proposed by Kitahara et al.~\cite{KMS13} to SOCP~\eqref{eq:SOCPstd}.
For simplicity, we use the following notation:
\begin{equation}
\bar{A} :=
	\begin{pmatrix}
	c^{\top}\\
	A
	\end{pmatrix}
	\in \bbR^{(1 + m) \times n},\
L := \left\{
	\begin{pmatrix}
	\gamma\\
	b
	\end{pmatrix}:
	\gamma \in \bbR\right\},
\end{equation}
and for some $E_{i} \subseteq V_{i}\ (i = 1, 2, \dots, p)$,
\begin{equation}
E :=
	\prod_{i = 1}^{p} E_{i},
\calK_{E} :=
	\{x \in \bbR^{n}: (1, (v^{i})^{\top}) x^{i} \ge 0\
	(\forall v^{i} \in E_{i}; i = 1, 2, \dots, p)\}.
\end{equation}
Moreover, we often denote $(1, v^{\top})^{\top}$ by $(1; v)$ for any vector~$v$.

In the proposed algorithm, we construct a sequence of outer polyhedral approximations of the SOCs.
By applying the LP-Newton method to the resulting LP,
we update the polyhedral approximation of the SOCs.
As a result, the algorithm generates a sequence~$\{\calK_{E^{(k)}}\}$ of adaptive outer approximations of the SOCs
and a sequence~$\{x^{(k)}\}$ of approximate optimal solutions to SOCP~\eqref{eq:SOCPstd}.
We name the proposed algorithm the \textit{adaptive LP-Newton (ALPN) method} for SOCP~\eqref{eq:SOCPstd}
and formally describe it as Algorithm~\ref{alg:ALPN}.

\begin{algorithm}
\caption{Adaptive LP-Newton method for SOCP~\eqref{eq:SOCPstd}}
\label{alg:ALPN}
\begin{algorithmic}[1]
\State{Choose initial finite sets~$E_{i}^{(0)} \subseteq V_{i}$ for $i = 1, 2, \dots, p$ such that
$\argmax\{c^{\top} x: A x = b, x \in \calK_{E^{(0)}}\} \ne \emptyset$.}
\State{}
\Comment{If no such $E^{(0)}$ exists, SOCP~\eqref{eq:SOCPstd} is infeasible or unbounded.}
\State{Choose an initial point~$w^{(0)} := (\gamma^{(0)}, b) \in L$ with sufficiently large $\gamma^{(0)} \in \bbR$.}
\State{Set $k := 0$.}
\Loop
	\ParState{%
	Find the nearest point $\bar{w}^{(k)} := (\zeta^{(k)}, b^{(k)}) \in \bbR \times \bbR^{m}$ of $w^{(k)}$
	in $\bar{A} \calK_{E^{(k)}} = \{\bar{A} x: x \in \calK_{E^{(k)}}\}$
	and obtain $x^{(k)}$ such that $\bar{A} x^{(k)} = \bar{w}^{(k)}$ and
	$x^{(k)} \in \calK_{E^{(k)}}$.}
	\If{$x^{(k)}$ is feasible for SOCP~\eqref{eq:SOCPstd}}
		\State{\Return{} $x^{(k)}$.}
		\Comment{$x^{(k)}$ is an optimal solution of SOCP~\eqref{eq:SOCPstd}.}
	\EndIf
	\ParState{%
	Let $w^{(k + 1)} := (\gamma^{(k + 1)}, b)$ be the intersection point of $L$ and $H^{(k)}$,
	where $H^{(k)}$ is the supporting hyperplane of $\bar{A} \calK_{E^{(k)}}$ on $\bar{w}^{(k)}$
	orthogonal to $w^{(k)} - \bar{w}^{(k)}$.}
	\For{$i = 1, 2, \dots, p$}
	\ParState{%
		Find $v^{i, (k)} \in V_{i}$ such that
		\begin{equation}
		v^{i, (k)} \in \argmin_{v^{i} \in V_{i}} (1, (v^{i})^{\top}) x^{i, (k)}
		\label{eq:cut}
		\end{equation}
		and set $E_{i}^{(k + 1)} := E_{i}^{(k)} \cup \{v^{i, (k)}\}$.}
		\State{}
		\Comment{If $(1, (v^{i, (k)})^{\top}) x^{i, (k)} < 0$, $v^{i, (k)}$ violates $x^{i} \in \calK^{n_{i}}$ most at $x^{i, (k)}$.}
	\EndFor
	\State{$k := k + 1$.}
\EndLoop
\end{algorithmic}
\end{algorithm}

In the computation of~$\bar{w}^{(k)}$ and $x^{(k)}$ in Algorithm~\ref{alg:ALPN},
we may solve the following QP:
\begin{equation}
\begin{array}{lll}
\text{minimize}&	\|\bar{A} x - w^{(k)}\|^{2}\\
\text{subject to}&	(1, (v^{i})^{\top}) x^{i} \ge 0	&(\forall v^{i} \in E_{i}^{(k)}; i = 1, 2, \dots, p),
\end{array}
\label{eq: projection}
\end{equation}
use an optimal solution as $x^{(k)}$, and set $\bar{w}^{(k)} = \bar{A} x^{(k)}$.
If we solve QP~\eqref{eq: projection} using the active set method,
we can set $(\bar{w}^{(k)}, x^{(k)})$ as an initial point of the $(k - 1)$-th iteration.
Despite the existence of a warm-start technique,
solving QPs is still computationally expensive.
Hence, a more sophisticated subroutine may be required.
The LP-Newton method~\cite{FHYZ09} for a box-constrained LP employs Wolfe's algorithm~\cite{Wol76}
to find the nearest point in a zonotope to a given point,
and the LP-Newton method~\cite{KMS13} for the standard form LP~\eqref{eq:LPstd} uses Wilhelmsen's algorithm~\cite{Wil76}
to find the nearest point in a polyhedral cone to a given point.
The subroutines are conjectured to be polynomial-time algorithms,
and thus the LP-Newton methods for LPs have the potential to be strongly polynomial-time algorithms.
Although these subroutines are powerful,
it may be difficult to use them in Algorithm~\ref{alg:ALPN}.
In these subroutines, extreme directions or points of the zonotope or polyhedral cone are explicitly required.
In our case, unfortunately, we do not have such explicit formulas.

Note that we can compute $\gamma^{(k + 1)}$ in Algorithm~\ref{alg:ALPN} by
\begin{equation}
\gamma^{(k + 1)} = \zeta^{(k)} - \frac{\|b - b^{(k)}\|^{2}}{\gamma^{(k)} - \zeta^{(k)}}.
\end{equation}
In addition, it is easy to compute $v^{i, (k)}$ in Algorithm~\ref{alg:ALPN}.
In fact, an optimal solution to Problem~\eqref{eq:cut} can be written in the following closed form:
\begin{equation}
v^{i, (k)} = -\frac{\bar{x}^{i, (k)}}{\|\bar{x}^{i, (k)}\|}
\end{equation}
if $\bar{x}^{i, (k)} \neq 0$,
where $\bar{x}^{i, (k)} \in \bbR^{n_{i} - 1}$ denotes the subvector of $x^{i, (k)}$ without the first element,
that is, $x^{i, (k)} = (x_{1}^{i, (k)}, \bar{x}^{i, (k)}) \in \bbR \times \bbR^{n_{i} - 1}$.

\section{Convergence analysis}
\label{sect: conv anal}

In this section, we prove that
a generated sequence converges globally to an optimum of SOCP~\eqref{eq:SOCPstd}.
To this end, we make the following assumption:

\begin{assumption}
\label{H1}
The optimal solution set~$\mathcal{S}_{\mathrm{opt}}^{\mathrm{P}}$ of SOCP~\eqref{eq:SOCPstd} is nonempty and compact.
\end{assumption}

\begin{remark}
Assumption~\ref{H1} holds if the dual problem of \eqref{eq:SOCPstd}:
\begin{equation}
\begin{array}{ll}
\text{minimize}&	b^{\top} y\\
\text{subject to}&	A^{\top} y - c \in \calK.
\end{array}
\label{eq:SOCPdual}
\end{equation}
has an optimum and strictly feasible solution, i.e.,
there exists some $\bar{y} \in \bbR^{m}$ such that $A^{\top} \bar{y} -c\in \interior \calK$.
\end{remark}

We first state the following technical lemmas.

\begin{lemma}
\label{lem:zinK}
Let $\{s^{(k)}\}$ be a sequence of nonnegative scalars 
and $\{z^{(k)}\}$ be defined by $z^{(k)} := s^{(k)}x^{(k)}$ for each $k \ge 0$.
If there exists an accumulation point of $\{z^{(k)}\}$, then it belongs to $\calK$.
\end{lemma}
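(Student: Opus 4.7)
The plan is to exploit the fact that the polyhedral approximations $\calK_{E^{(k)}}$ accumulate every cutting plane generated at earlier iterations. Concretely, once $v^{i,(k)}$ is added to $E_{i}^{(k+1)}$, the monotone inclusion $E_{i}^{(k+1)} \subseteq E_{i}^{(\ell)}$ for all $\ell \ge k+1$ forces every subsequent iterate to satisfy $(1, (v^{i,(k)})^{\top}) x^{i,(\ell)} \ge 0$. Multiplying by the nonnegative scalar $s^{(\ell)}$ preserves the inequality, so $(1, (v^{i,(k)})^{\top}) z^{i,(\ell)} \ge 0$ whenever $\ell > k$. This is the key invariant I will pass to the limit.

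Let $\{z^{(k_{j})}\}$ be a subsequence converging to $z^{*}$ and fix a block index $i$ with $n_{i} \ge 2$ (the case $n_{i} = 1$ is immediate since $x^{i,(k)} \ge 0$ is imposed directly, hence $z^{i,*} \ge 0$). For indices $j < j'$ the invariant gives $(1, (v^{i,(k_{j})})^{\top}) z^{i,(k_{j'})} \ge 0$; letting $j' \to \infty$ with $j$ fixed yields
\begin{equation}
z^{*,i}_{1} + (v^{i,(k_{j})})^{\top} \bar{z}^{i,*} \ge 0.
\end{equation}
I then split on whether $\bar{z}^{i,*} = 0$. If so, the inequality collapses to $z^{*,i}_{1} \ge 0$, which places $z^{i,*} = (z^{*,i}_{1}, 0)$ in $\calK^{n_{i}}$.

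If $\bar{z}^{i,*} \neq 0$, then $\bar{z}^{i,(k_{j})} \neq 0$ for all sufficiently large $j$, which forces both $s^{(k_{j})} > 0$ and $\bar{x}^{i,(k_{j})} \neq 0$. The closed-form expression $v^{i,(k_{j})} = -\bar{x}^{i,(k_{j})}/\|\bar{x}^{i,(k_{j})}\|$ is invariant under positive rescaling, so it equals $-\bar{z}^{i,(k_{j})}/\|\bar{z}^{i,(k_{j})}\|$, and continuity of normalization gives $v^{i,(k_{j})} \to -\bar{z}^{i,*}/\|\bar{z}^{i,*}\|$. Substituting this limit into the inequality from the previous paragraph yields $z^{*,i}_{1} - \|\bar{z}^{i,*}\| \ge 0$, so $z^{i,*} \in \calK^{n_{i}}$. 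Since this holds for every block, $z^{*} \in \calK$.

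The step I expect to be most delicate is the ordering of the two passages to the limit: one must first send $j' \to \infty$ with $j$ held fixed in order to pin down a single half-space containing $z^{*}$, and only afterwards let $j \to \infty$ so that the normal of that half-space converges to the outward SOC normal at $\bar{z}^{i,*}$. Since $\{v^{i,(k_{j})}\}$ lies in the compact set $V_{i}$, the required convergence along the subsequence is automatic, so the iterated-limit bookkeeping is really the only ingredient beyond the cutting-plane invariant.
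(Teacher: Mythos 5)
Your proof is correct, and its core is the same as the paper's: the persistence of cuts ($v^{i,(k)}\in E_i^{(\ell)}$ for all $\ell>k$, hence $(1,(v^{i,(k)})^{\top})z^{i,(\ell)}\ge 0$ after multiplying by $s^{(\ell)}\ge 0$) followed by an iterated limit, first in the iterate index with the cut fixed, then in the cut index. Where you diverge is the finishing step that identifies the limiting cut. The paper extracts, by compactness of $V_i$, a subsequence along which the cuts converge to some $v^{i,\ast}$, argues by stability of the $\argmin$ that $v^{i,\ast}\in\argmin_{v^i\in V_i}(1,(v^i)^{\top})z^{i,\ast}$, and then concludes $z^{i,\ast}\in\calK^{n_i}$ because nonnegativity at the minimizer yields nonnegativity over all of $V_i$, i.e.\ the semi-infinite description of the SOC. You instead exploit the closed form $v^{i,(k_j)}=-\bar{x}^{i,(k_j)}/\|\bar{x}^{i,(k_j)}\|$ (which is forced when $\bar x^{i,(k_j)}\neq 0$, since the minimizer over the unit ball is then unique and invariant under the positive scaling by $s^{(k_j)}$), split on whether $\bar z^{i,\ast}=0$, and in the nontrivial case get $v^{i,(k_j)}\to -\bar z^{i,\ast}/\|\bar z^{i,\ast}\|$ directly, giving $z^{\ast,i}_1\ge\|\bar z^{i,\ast}\|$. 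Your route is slightly more elementary: it needs neither a convergent subsequence of cuts nor the limiting-argmin argument, and it produces the SOC inequality explicitly; the price is the case analysis and reliance on the specific formula for the cut-generation subproblem, whereas the paper's version uses only the minimizing property of $v^{i,(k)}$ and so transfers unchanged to settings where the argmin is not a singleton or no closed form is available. Both handle the $n_i=1$ blocks and the degenerate cases ($s^{(k)}=0$ or $\bar z^{i,\ast}=0$) adequately.
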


\begin{proof}
Let us express $z^{(k)}$ as
$z^{(k)}
:= (z^{1, (k)}, z^{2, (k)}, \dots,z^{p, (k)})
= s^{(k)} (x^{1, (k)}, x^{2, (k)}, \dots, x^{p, (k)})$
and let $z^{\ast} := (z^{1, \ast}, z^{2, \ast}, \dots,z^{p, \ast})$ be an arbitrarily chosen accumulation point of $\{z^{(k)}\}$.
As $V^{i}$ is compact for every $i = 1, 2, \dots, p$,
$\{v^{(k)}\} \subseteq V = \prod_{i = 1}^{p} V_{i}$ has at least one accumulation point in $V$.
Denote this point by $v^{\ast}$ and express it as $(v^{1,\ast}, v^{2, \ast}, \dots, v^{p, \ast}) \in \prod_{i = 1}^{p} V_{i}$.
Taking an appropriate subsequence $\{v^{(k)}\}_{k \in S}$,
we can assume that $(v^{(k)}, z^{(k)})$ converges to $(v^{\ast}, z^{\ast})$ as $k$ tends to $\infty$ in $S$.

Note that $v^{i, (k)} \in \argmin_{v^{i} \in V_{i}} (1, (v^{i})^{\top}) z^{i, (k)}$ holds
because $v^{i, (k)} \in \argmin_{v^{i} \in V_{i}} (1, (v^{i})^{\top}) x^{i, (k)}$ and $s^{(k)} \ge 0$.
Here, by letting $k\to \infty$ in $S$,
we obtain $v^{i, \ast} \in \argmin_{v^{i} \in V_{i}} (1, (v^{i})^{\top}) z^{i, \ast}$.
Thus, to show $z^{\ast} \in \calK$, it suffices to prove that
$(1, (v^{i, \ast})^{\top}) z^{i, \ast} \ge 0$ for each $i$.
To this end, let us fix $i$ and prove $ (1, (v^{i, (k)})^{\top}) z^{i, \ast} \ge 0$ for any $k \in S$.
Choosing some arbitrary $\hat{k} \in S$, it follows that $(1, (v^{i, (\hat{k})})^{\top}) z^{i, (k)} \ge 0$ for any $k > \hat{k}$ in $S$,
because $z^{i, (k)} = s^{(k)} x^{i, (k)} \in \calK_{E^{(k)}}$
and $v^{i, (\hat{k})} \in E_{i}^{(k)}$ for any $k > \hat{k}$.
Then, by letting $k$ tend to $\infty$ in $S$,
we obtain $(1, (v^{i, (\hat{k})})^{\top}) z^{i, \ast} \ge 0$.
As $\hat{k}$ was arbitrarily chosen from $S$,
we conclude that $(1, (v^{i, (k)})^{\top}) z^{i, \ast} \ge 0\ (k \in S)$ holds.
Finally, by forcing $k \in S \to \infty$,
we conclude that $(1, (v^{i, \ast})^{\top}) z^{i, \ast} \ge 0$ for any $i$.
Therefore, $z^{\ast} \in \calK$.
\qed
\end{proof}

\begin{lemma}
\label{lem:obj}
Let $\theta^{\ast}$ be the optimal value of SOCP~\eqref{eq:SOCPstd}.
If the algorithm does not stop at the $k$-th iteration, we have
\begin{equation}
\theta^{\ast} \le \gamma^{(k + 1)} \le \zeta^{(k)} \le \gamma^{(k)}.
\end{equation}
\end{lemma}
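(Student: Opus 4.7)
The plan is to establish all three inequalities simultaneously by induction on $k$, carrying the strengthened hypothesis $\gamma^{(k)} \ge \theta^{\ast}$. The base case comes from the prescription that $\gamma^{(0)}$ be sufficiently large: since $\calK \subseteq \calK_{E^{(0)}}$, the LP relaxation over $\calK_{E^{(0)}}$ (whose optimum is finite by the initialisation hypothesis on $E^{(0)}$) has value $\theta_{0}^{\ast} \ge \theta^{\ast}$, and I read ``sufficiently large'' as $\gamma^{(0)} \ge \theta_{0}^{\ast} \ge \theta^{\ast}$.

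For the inductive step, the key observation is that any $x^{\ast} \in \mathcal{S}_{\mathrm{opt}}^{\mathrm{P}}$ (nonempty by Assumption~\ref{H1}) satisfies $x^{\ast} \in \calK \subseteq \calK_{E^{(k)}}$, so the point $(\theta^{\ast},b) = \bar{A} x^{\ast}$ belongs to $\bar{A}\calK_{E^{(k)}}$. Because $\calK_{E^{(k)}}$ is a polyhedral cone, $\bar{A}\calK_{E^{(k)}}$ is closed and convex, and the variational characterisation of the projection $\bar{w}^{(k)}$ of $w^{(k)}$ onto it yields $(w^{(k)} - \bar{w}^{(k)})^{\top} ((\theta^{\ast},b) - \bar{w}^{(k)}) \le 0$, which after expansion becomes the master inequality
\begin{equation}
(\gamma^{(k)} - \zeta^{(k)})(\theta^{\ast} - \zeta^{(k)}) \le -\|b - b^{(k)}\|^{2}.
\end{equation}

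From here I proceed in three stages. First, to show $\zeta^{(k)} \le \gamma^{(k)}$ I argue by contradiction: if instead $\zeta^{(k)} > \gamma^{(k)}$, dividing the master inequality by the negative quantity $\gamma^{(k)} - \zeta^{(k)}$ reverses it, giving $\theta^{\ast} - \zeta^{(k)} \ge \|b - b^{(k)}\|^{2}/(\zeta^{(k)} - \gamma^{(k)}) \ge 0$ and hence $\theta^{\ast} \ge \zeta^{(k)} > \gamma^{(k)}$, contradicting the inductive hypothesis. Second, with $\gamma^{(k)} > \zeta^{(k)}$ in hand, the identity $\gamma^{(k+1)} = \zeta^{(k)} - \|b - b^{(k)}\|^{2}/(\gamma^{(k)} - \zeta^{(k)})$ already recorded after Algorithm~\ref{alg:ALPN} gives $\gamma^{(k+1)} \le \zeta^{(k)}$ at once. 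Third, dividing the master inequality by $\gamma^{(k)} - \zeta^{(k)} > 0$ and rearranging produces exactly $\theta^{\ast} \le \gamma^{(k+1)}$, which both closes the current step and propagates the invariant $\gamma^{(k+1)} \ge \theta^{\ast}$ to the next iteration.

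The hard part is the first stage: the projection condition by itself does not pin down the sign of $\gamma^{(k)} - \zeta^{(k)}$, so establishing $\zeta^{(k)} \le \gamma^{(k)}$ genuinely forces the induction to drag along the auxiliary invariant $\gamma^{(k)} \ge \theta^{\ast}$. A minor secondary subtlety is the degenerate configuration $\gamma^{(k)} = \zeta^{(k)}$, in which the formula for $\gamma^{(k+1)}$ is a $0/0$; there the master inequality forces $b^{(k)} = b$ and $w^{(k)} = \bar{w}^{(k)}$, so $x^{(k)}$ is LP-relaxation feasible with objective value $\gamma^{(k)}$, and the algorithm either terminates (if $x^{(k)} \in \calK$) or requires a tie-breaking choice of $H^{(k)}$ without disturbing the asserted inequalities.
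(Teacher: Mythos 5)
Your proof is correct, but it takes a genuinely different route from the paper's. The paper argues by reduction: it introduces the optimal value $\theta^{(k)}$ of the relaxed problem $\max\{c^{\top}x : Ax=b,\ x\in\calK_{E^{(k)}}\}$, observes $\theta^{\ast}\le\theta^{(k)}$ because $\calK\subseteq\calK_{E^{(k)}}$, and then invokes the argument of Lemma~3.1 of Kitahara et al.~\cite{KMS13} applied to this relaxed LP to obtain $\theta^{(k)}\le\gamma^{(k+1)}\le\zeta^{(k)}\le\gamma^{(k)}$ --- so it in fact proves a slightly stronger chain, bounding $\gamma^{(k+1)}$ below by the relaxation value rather than by $\theta^{\ast}$. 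You instead give a self-contained argument: you feed the single point $\bar{A}x^{\ast}=(\theta^{\ast},b)$ into the variational inequality characterizing the projection of $w^{(k)}$ onto the closed polyhedral cone $\bar{A}\calK_{E^{(k)}}$, and run an induction carrying the invariant $\gamma^{(k)}\ge\theta^{\ast}$ to pin down the sign of $\gamma^{(k)}-\zeta^{(k)}$, after which the recorded formula for $\gamma^{(k+1)}$ delivers the middle inequality and the rearranged master inequality propagates the invariant. What your route buys: independence from the external lemma, no need to discuss the relaxed LP's optimal value or its attainment, a base case that matches the intended reading of ``sufficiently large $\gamma^{(0)}$'', and an explicit treatment of the degenerate configuration $\gamma^{(k)}=\zeta^{(k)}$ (where $b^{(k)}=b$, $w^{(k)}=\bar{w}^{(k)}$, and the hyperplane $H^{(k)}$ is ill-defined), an edge case the paper passes over in silence. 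What the paper's route buys: brevity and the sharper intermediate bound $\theta^{(k)}\le\gamma^{(k+1)}$, at the cost of leaning on the argument of \cite{KMS13} and leaving that degenerate case implicit. Since the rest of the paper (Lemmas~\ref{lem:limb} and Theorem~\ref{thm:xopt}) only uses the inequalities as stated, your weaker bound in terms of $\theta^{\ast}$ suffices.
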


\begin{proof}
Let $\theta^{(k)} \in \bbR$ be the optimal value of LSIP~\eqref{eq:SIP}
with $V_{i}$ replaced by $E_{i}^{(k)}$ for $i = 1, 2, \dots, p$,
which is a relaxation problem for SOCP~\eqref{eq:SOCPstd}.
Therefore, $\theta^{\ast} \le \theta^{(k)}$ holds.
In a similar manner to \cite[Lemma~3.1]{KMS13}, it can be verified that
\begin{equation}
\theta^{(k)}\le \gamma^{(k + 1)} \le \zeta^{(k)} \le \gamma^{(k)}.
\end{equation}
Hence, we have the desired result.
\qed
\end{proof}

\begin{lemma}
\label{lem:limb}
$\lim_{k \to \infty} b^{(k)} = b$ holds.
\end{lemma}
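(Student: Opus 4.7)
The plan is to combine the monotone convergence of $\{\gamma^{(k)}\}$ established in Lemma~\ref{lem:obj} with the explicit update formula for $\gamma^{(k+1)}$ stated in Algorithm~\ref{alg:ALPN}. Rearranging that formula gives the key identity
\begin{equation}
\|b - b^{(k)}\|^{2} = (\zeta^{(k)} - \gamma^{(k + 1)})(\gamma^{(k)} - \zeta^{(k)}),
\end{equation}
so it suffices to show that each of the two nonnegative factors on the right-hand side tends to zero.

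For this, I would first invoke Lemma~\ref{lem:obj} to note that $\{\gamma^{(k)}\}$ is monotone nonincreasing and bounded below by $\theta^{\ast}$, so it converges to some limit $\gamma^{\ast}\in\bbR$. The chain $\gamma^{(k+1)}\le \zeta^{(k)}\le \gamma^{(k)}$ from the same lemma then squeezes $\zeta^{(k)}\to\gamma^{\ast}$ as well, so both $\gamma^{(k)} - \zeta^{(k)}\to 0$ and $\zeta^{(k)} - \gamma^{(k+1)}\to 0$. Substituting into the identity above yields $\|b - b^{(k)}\|\to 0$, i.e.\ $b^{(k)}\to b$.

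The only conceptual obstacle is the possibility of a degenerate iteration in which $\gamma^{(k)} = \zeta^{(k)}$, where the closed-form update for $\gamma^{(k+1)}$ is undefined. In that case the direction $w^{(k)} - \bar{w}^{(k)}$ has zero first component, so the supporting hyperplane $H^{(k)}$ is parallel to $L$; they can intersect only if $b = b^{(k)}$, in which case $w^{(k)}=\bar{w}^{(k)}$ and the conclusion $b^{(k)}=b$ is already in hand for that index. Thus the degenerate case is consistent with (in fact immediately implies) the claim and does not interfere with the limiting argument applied to the non-degenerate iterations. Beyond this book-keeping, the proof is a direct telescoping consequence of Lemma~\ref{lem:obj} and the geometry of the projection step, and I do not anticipate any deeper difficulty.
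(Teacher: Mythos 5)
Your proof is correct and takes essentially the same route as the paper: both deduce from Lemma~\ref{lem:obj} that $\{\gamma^{(k)}\}$ and $\{\zeta^{(k)}\}$ converge to a common limit and then bound $\|b-b^{(k)}\|$ by these vanishing gaps---you via the rearranged update identity $\|b-b^{(k)}\|^{2}=(\zeta^{(k)}-\gamma^{(k+1)})(\gamma^{(k)}-\zeta^{(k)})$, the paper via the equivalent projection inequality $|\gamma^{(k)}-\gamma^{(k+1)}|=\|w^{(k)}-w^{(k+1)}\|\ge\|w^{(k)}-\bar{w}^{(k)}\|\ge\|b-b^{(k)}\|$. Your additional bookkeeping for the degenerate case $\gamma^{(k)}=\zeta^{(k)}$ is sound and covers a detail the paper's proof passes over silently.
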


\begin{proof}
To show the desired result, we prove $\lim_{k \to \infty} \|b^{(k)} - b\| = 0$.
Note that $\{\gamma^{(k)}\}$ and $\{\zeta^{(k)}\}$ converge to the same point by Lemma~\ref{lem:obj}.
Thus, $|\gamma^{(k)}-\gamma^{(k + 1)}| \to 0$ and $|\gamma^{(k + 1)} - \zeta^{(k)}| \to 0$ as $k \to \infty$.
Moreover, as $\bar{w}^{(k)}$ is the projection of $w^{(k)}$ onto the supporting hyperplane $H^{(k)}$ and ${w}^{(k + 1)}\in H^{(k)}$,
we have 
\begin{equation}
|\gamma^{(k)} - \gamma^{(k + 1)}|
= \|w^{(k)} - w^{(k + 1)}\|
\ge \|w^{(k)} - \bar{w}^{(k)}\|
= \left\|
\begin{pmatrix}
\gamma^{(k + 1)} - \zeta^{(k)}\\
b^{(k)} - b
\end{pmatrix}
\right\|.
\end{equation}
From these facts, it follows that $\lim_{k \to \infty} \|b^{(k)} - b\| = 0$.
\qed
\end{proof}

\begin{proposition}
\label{prop:boundx}
If Assumption~\ref{H1} holds,
then the generated sequence $\{x^{(k)}\}$ is bounded.
\end{proposition}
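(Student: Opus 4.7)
The plan is to argue by contradiction. Suppose $\{x^{(k)}\}$ is unbounded, so along a subsequence $\|x^{(k)}\| \to \infty$. Set $s^{(k)} := 1 / \|x^{(k)}\|$ and $z^{(k)} := s^{(k)} x^{(k)}$, so that $\|z^{(k)}\| = 1$. By compactness of the unit sphere, passing to a further subsequence we may assume $z^{(k)} \to z^{\ast}$ with $\|z^{\ast}\| = 1$. Since $s^{(k)} \ge 0$, Lemma~\ref{lem:zinK} then yields $z^{\ast} \in \calK$.

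Next I would show that $z^{\ast}$ is a recession direction of the feasible set with zero objective. By construction,
\begin{equation}
\bar{A} z^{(k)} = s^{(k)} \bar{A} x^{(k)} = s^{(k)} \begin{pmatrix} \zeta^{(k)} \\ b^{(k)} \end{pmatrix}.
\end{equation}
From Lemma~\ref{lem:obj}, the scalars $\zeta^{(k)}$ lie in $[\theta^{\ast}, \gamma^{(0)}]$ and are hence bounded, while Lemma~\ref{lem:limb} gives $b^{(k)} \to b$, so $\{(\zeta^{(k)}, b^{(k)})\}$ is bounded. Combined with $s^{(k)} \to 0$, this forces $\bar{A} z^{\ast} = 0$, i.e., $c^{\top} z^{\ast} = 0$ and $A z^{\ast} = 0$.

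Now I would invoke Assumption~\ref{H1}. Pick any $x^{\ast} \in \mathcal{S}_{\mathrm{opt}}^{\mathrm{P}}$. Because $\calK$ is a convex cone, $x^{\ast} + t z^{\ast} \in \calK$ for every $t \ge 0$; and $A(x^{\ast} + t z^{\ast}) = b$, $c^{\top}(x^{\ast} + t z^{\ast}) = \theta^{\ast}$. Hence the whole ray $\{x^{\ast} + t z^{\ast} : t \ge 0\}$ lies in $\mathcal{S}_{\mathrm{opt}}^{\mathrm{P}}$. Since $\|z^{\ast}\| = 1$, this ray is unbounded, contradicting the compactness of $\mathcal{S}_{\mathrm{opt}}^{\mathrm{P}}$ assumed in~\ref{H1}. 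Therefore $\{x^{(k)}\}$ must be bounded.

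The only slightly delicate step is verifying that the limiting direction $z^{\ast}$ actually lies in $\calK$, and this is exactly the content of Lemma~\ref{lem:zinK}; the rest reduces to combining the already-established bounds on $\zeta^{(k)}$ and $b^{(k)}$ with the standard recession-direction argument for convex cones.
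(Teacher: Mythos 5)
Your proposal is correct and follows essentially the same route as the paper: normalize the putatively unbounded subsequence, use Lemma~\ref{lem:zinK} to place the limiting direction in $\calK$, use Lemmas~\ref{lem:obj} and~\ref{lem:limb} to kill $\bar{A}z^{\ast}$, and contradict compactness of $\mathcal{S}_{\mathrm{opt}}^{\mathrm{P}}$ via an unbounded ray of optimal solutions. In fact your version is marginally tighter, since you extract the exact equality $c^{\top}z^{\ast}=0$ from $\theta^{\ast}\le\zeta^{(k)}\le\gamma^{(0)}$, which makes the claim that the ray consists of optimal points immediate, whereas the paper only records $c^{\top}d^{\ast}\le 0$ at that step.
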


\begin{proof}
Denote the feasible domain of SOCP~\eqref{eq:SOCPstd} by $\mathcal{F}$.
To show the boundedness of $\{x^{(k)}\}$, we assume to the contrary for a contradiction.
Thus, there exists some subsequence $\{x^{(k)}\}_{k \in S}\subseteq \{x^{(k)}\}$
such that $\lim_{k \in S \to \infty} \|x^{(k)}\| = \infty$ and $\|x^{(k)}\| \neq 0$ for any $k \in S$.
Then, we have
\begin{equation}
\frac{A x^{(k)}}{\|x^{(k)}\|} = \frac{b^{(k)}}{\|x^{(k)}\|},\
\frac{x^{(k)}}{\|x^{(k)}\|} \in \calK_{E^{(k)}},\
\frac{c^{\top} x^{(k)}}{\|x^{(k)}\|} \le \frac{\gamma^{(0)}}{\|x^{(k)}\|},
\end{equation}
where the second relation is derived from the fact that $x^{(k)} \in \calK_{E^{(k)}}$ and $\calK_{E^{(k)}}$ is a cone.
Letting $k$ tend to $\infty$ in the above and choosing an arbitrary accumulation point of $\{x^{(k)} / \|x^{(k)}\|\}$, denoted by $d^{\ast}$, implies that
\begin{equation}
A d^{\ast} = 0,\
d^{\ast} \in \calK,\
c^{\top} d^{\ast} \le 0,\
\|d^{\ast}\| = 1,
\label{eq1}
\end{equation}
where the first relation follows from the boundedness of $\{b^{(k)}\}$ implied by Lemma~\ref{lem:limb}
and the second one follows from Lemma~\ref{lem:zinK} with $z^{(k)} = x^{(k)} / \|x^{(k)}\|$ and $s^{(k)} = 1 / \|x^{(k)}\|$.
Choose $\bar{z} \in \mathcal{S}_{\mathrm{opt}}^{\mathrm{P}}$ arbitrarily and define $\Omega := \{\bar{z} + s d^{\ast}: s \ge 0\}$.
We then deduce that $\Omega \subseteq \mathcal{S}_{\mathrm{opt}}^{\mathrm{P}}$ from Equation~\eqref{eq1},
because
\begin{equation}
A (\bar{z} + s d^{\ast}) = b,\
\bar{z} + s d^{\ast} \in \calK,\
c^{\top} (\bar{z} + s d^{\ast}) \le c^{\top} \bar{z}
\end{equation}
for any $s \ge 0$, where the second statement follows from the facts that
$\bar{z} \in \calK$, $s d^{\ast} \in \calK$, and $\calK$ is a convex cone.
Note that $\Omega$ is unbounded because $\|d^{\ast}\| = 1$,
which implies the unboundedness of $\mathcal{S}_{\mathrm{opt}}^{\mathrm{P}}$.
However, this contradicts Assumption~\ref{H1}.
As a consequence, $\{x^{(k)}\}$ is bounded.
\qed
\end{proof}

\begin{theorem}
\label{thm:xopt}
If Assumption~\ref{H1} holds,
any accumulation point of $\{x^{(k)}\}$ is an optimum of SOCP~\eqref{eq:SOCPstd}.
\end{theorem}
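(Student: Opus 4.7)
The plan is to verify the three conditions that make an accumulation point $x^{\ast}$ of $\{x^{(k)}\}$ an SOCP optimum: the affine feasibility $A x^{\ast} = b$, the conic membership $x^{\ast} \in \calK$, and the optimality $c^{\top} x^{\ast} = \theta^{\ast}$. Proposition~\ref{prop:boundx} already guarantees that accumulation points exist, so after extracting a convergent subsequence $\{x^{(k)}\}_{k \in S}$ it suffices to pass to the limit at three separate places. Finite termination of Algorithm~\ref{alg:ALPN} is handled by its built-in stopping criterion, so I would focus on the case where the algorithm generates an infinite sequence.

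First, I would exploit the projection identity $\bar{A} x^{(k)} = \bar{w}^{(k)} = (\zeta^{(k)}, b^{(k)})$, whose lower block reads $A x^{(k)} = b^{(k)}$. Passing to the limit along $S$ and combining with Lemma~\ref{lem:limb} gives $A x^{\ast} = b$. For the conic membership I would invoke Lemma~\ref{lem:zinK} with $s^{(k)} := 1$ and $z^{(k)} := x^{(k)}$: its hypothesis holds automatically, so the conclusion delivers $x^{\ast} \in \calK$. These two steps together show that $x^{\ast}$ is feasible for SOCP~\eqref{eq:SOCPstd}, hence $c^{\top} x^{\ast} \le \theta^{\ast}$.

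For optimality I would appeal to Lemma~\ref{lem:obj}, which provides the chain $\theta^{\ast} \le \gamma^{(k+1)} \le \zeta^{(k)} \le \gamma^{(k)}$. Thus $\{\gamma^{(k)}\}$ is monotonically nonincreasing and bounded below, so it converges to some $\gamma^{\ast} \ge \theta^{\ast}$, and the sandwich forces $\zeta^{(k)} \to \gamma^{\ast}$ as well. Reading off the top block of $\bar{w}^{(k)}$ gives $c^{\top} x^{(k)} = \zeta^{(k)}$, so taking the limit along $S$ yields $c^{\top} x^{\ast} = \gamma^{\ast} \ge \theta^{\ast}$. Combined with $c^{\top} x^{\ast} \le \theta^{\ast}$ from the feasibility step, this forces $c^{\top} x^{\ast} = \theta^{\ast}$ and identifies $x^{\ast}$ as an optimum.

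The routine calculations are already in place, so the main conceptual hurdle is the conic step $x^{\ast} \in \calK$: the iterates only live in the time-varying polyhedral relaxations $\calK_{E^{(k)}}$ rather than in $\calK$ itself, and naive passage to the limit in $x^{(k)} \in \calK_{E^{(k)}}$ is not meaningful. However, the cutting-plane updates in Algorithm~\ref{alg:ALPN} were precisely engineered so that Lemma~\ref{lem:zinK} absorbs this limiting argument, reducing the step to a direct appeal and leaving the theorem as a clean assembly of the preceding lemmas.
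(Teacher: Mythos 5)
Your proposal is correct and follows essentially the same route as the paper: feasibility of the accumulation point via $A x^{(k)} = b^{(k)}$ with Lemma~\ref{lem:limb} and via Lemma~\ref{lem:zinK} (with $s^{(k)} = 1$), then optimality from $\theta^{\ast} \le \zeta^{(k)} = c^{\top} x^{(k)}$ supplied by Lemma~\ref{lem:obj}. Your detour through the monotone limit $\gamma^{\ast}$ is harmless but unnecessary, since passing to the limit in $\theta^{\ast} \le \zeta^{(k)} = c^{\top} x^{(k)}$ along the subsequence already gives $\theta^{\ast} \le c^{\top} x^{\ast}$, which is exactly what the paper does.
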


\begin{proof}
From Proposition~\ref{prop:boundx}, $\{x^{(k)}\}$ is bounded and has an accumulation point.
Choose an arbitrary accumulation point and denote it by $x^{\ast}$.
Without loss of generality, we may assume that $\lim_{k \to \infty} x^{(k)} = x^{\ast}$.
Now, let us recall that
$A x^{(k)} = b^{(k)}$ and $x^{(k)} \in \calK_{E^{(k)}}$
hold for any $k$. Together with Lemmas~\ref{lem:zinK} and \ref{lem:limb}, this implies
that $A x^{\ast} = b$ and $x^{\ast} \in \calK$, that is, $x^{\ast}$ is feasible for the SOCP.
Hence, $c^{\top} x^{\ast} \le \theta^{\ast}$ follows, where $\theta^{\ast}$ denotes the optimal value of the SOCP.
However, by Lemma~\ref{lem:obj},
it holds that $\theta^{\ast} \le \zeta^{(k)} = c^{\top} x^{(k)}$ for any $k$,
and by taking the limit therein, we obtain $\theta^{\ast} \le c^{\top}x^{\ast}$.
Therefore, we have $c^{\top} x^{\ast} = \theta^{\ast}$.
Thus, we conclude that $x^{\ast}$ is optimal for the SOCP.
\qed
\end{proof}

\begin{remark}
According to \cite[Theorem~3.1]{KMS13},
for the case with $\calK = \bbR_{+}^{n}$,
the number of iterations of the algorithm is, at most, the number of faces of the cone $\bar{A} \calK$.
\end{remark}

\section{Dual algorithm}
\label{sect: dual}

\subsection{Description of the algorithm}
In Section~\ref{sect: conv anal}, we proposed the ALPN method for solving SOCP~\eqref{eq:SOCPstd}.
In this section, we consider a dual algorithm for the ALPN method,
which solves the dual problem of SOCP~\eqref{eq:SOCPstd} in dual variables $y\in\bbR^m$:
\begin{equation}
\begin{array}{ll}
\text{minimize}&	b^{\top} y\\
\text{subject to}&	A^{\top} y - c \in \calK.
\end{array}
\label{eq:SOCPd}
\end{equation}
In the dual algorithm,
the following property plays a crucial role.

\begin{proposition}
Let $x^{\ast}$ be an optimum of SOCP~\eqref{eq:SOCPstd} and 
$H^{\ast}$ be a supporting hyperplane of $\bar{A} \calK$ at $\bar{A} x^{\ast} \in \bar{A} \calK$.
Suppose that $(1, -({y}^{\ast})^{\top})^{\top}$ is a normal vector to $H^{\ast}$.
Then, together with $x^{\ast}$, ${y}^{\ast}$ and $\eta := A^{\top}{y}^{\ast} - c$ satisfy the Karush--Kuhn--Tucker (KKT) conditions
of SOCP~\eqref{eq:SOCPstd}:
\begin{equation}
A x^{\ast} = b,\
-c + A^{\top} y^{\ast} - \eta = 0,\
\eta \in \calK,\
x^{\ast} \in \calK,\
\eta^{\top} x^{\ast} = 0.
\label{dlsec:KKT}
\end{equation}
In particular, $y^{\ast}$ is an optimum of the dual SOCP~\eqref{eq:SOCPd}.
\end{proposition}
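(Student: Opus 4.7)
The plan is to verify each of the KKT conditions in \eqref{dlsec:KKT} and then invoke weak duality to conclude that $y^{\ast}$ is optimal for \eqref{eq:SOCPd}. Primal feasibility $A x^{\ast} = b$ and $x^{\ast} \in \calK$ is immediate from the assumption that $x^{\ast}$ is optimal for \eqref{eq:SOCPstd}, and the stationarity relation $-c + A^{\top} y^{\ast} - \eta = 0$ is nothing but the definition of $\eta$. Hence the real work is to extract $\eta \in \calK$ and $\eta^{\top} x^{\ast} = 0$ from the supporting-hyperplane hypothesis.

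First I would rewrite the supporting-hyperplane inequality in terms of $\eta$. Taking $(1, -(y^{\ast})^{\top})^{\top}$ as the outward normal to $\bar{A} \calK$ at $\bar{A} x^{\ast}$ (the sign convention consistent with $w^{(k)} - \bar{w}^{(k)}$ in Algorithm~\ref{alg:ALPN}, whose first component $\gamma^{(k)} - \zeta^{(k)}$ is nonnegative), every $x \in \calK$ must satisfy
\[
(1,\,-(y^{\ast})^{\top})\,\bar{A} x \;\le\; (1,\,-(y^{\ast})^{\top})\,\bar{A} x^{\ast}.
\]
Because $\bar{A}$ stacks $c^{\top}$ on top of $A$, the left-hand side reduces to $c^{\top} x - (y^{\ast})^{\top} A x = -\eta^{\top} x$, so the inequality is equivalent to $\eta^{\top} x \ge \eta^{\top} x^{\ast}$ for every $x \in \calK$.

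Next I would exploit the cone structure of $\calK$ twice: substituting $x = 0 \in \calK$ gives $\eta^{\top} x^{\ast} \le 0$, while substituting $x = 2 x^{\ast} \in \calK$ (legitimate since $\calK$ is a cone and $x^{\ast} \in \calK$) yields $2\eta^{\top} x^{\ast} \ge \eta^{\top} x^{\ast}$, hence $\eta^{\top} x^{\ast} \ge 0$. Together these produce the complementarity $\eta^{\top} x^{\ast} = 0$, and inserting this back shows $\eta^{\top} x \ge 0$ for all $x \in \calK$, i.e., $\eta$ lies in the dual cone $\calK^{\ast}$. Since a product of second-order cones is self-dual, $\calK^{\ast} = \calK$ and therefore $\eta \in \calK$. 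With the full KKT system in hand, dual feasibility of $y^{\ast}$ follows from $A^{\top} y^{\ast} - c = \eta \in \calK$, and the chain $b^{\top} y^{\ast} = (A x^{\ast})^{\top} y^{\ast} = (x^{\ast})^{\top} (c + \eta) = c^{\top} x^{\ast}$ exhibits zero duality gap, whence weak duality between \eqref{eq:SOCPstd} and \eqref{eq:SOCPd} forces $y^{\ast}$ to be optimal.

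The step I expect to be the main obstacle is pinning down the correct orientation of the hyperplane normal, since the proposition only states that $(1, -(y^{\ast})^{\top})^{\top}$ is \emph{a} normal vector, without fixing a sign. The opposite orientation would flip the displayed inequality and yield $-\eta \in \calK$, destroying dual feasibility. The natural convention, matching the way supporting hyperplanes arise in Algorithm~\ref{alg:ALPN} through $w^{(k)} - \bar{w}^{(k)}$ pointing away from $\bar{A} \calK_{E^{(k)}}$, is that $(1, -(y^{\ast})^{\top})^{\top}$ is the outward normal to $\bar{A} \calK$; making this explicit (either as a preliminary remark or as a tightened hypothesis) is the one subtle point that any clean proof must address up front.
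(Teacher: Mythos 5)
Your proof is correct and follows essentially the same route as the paper's: the paper likewise reduces the supporting-hyperplane hypothesis to $\bar{A}x^{\ast}\in\argmax_{w\in\bar{A}\calK}\,(1,-(y^{\ast})^{\top})w$, which is exactly your inequality $\eta^{\top}x\ge\eta^{\top}x^{\ast}$ on $\calK$, then appeals to the KKT conditions of this conic subproblem to obtain $\eta\in\calK$ and $\eta^{\top}x^{\ast}=0$ (the step you carry out by hand via $x=0$, $x=2x^{\ast}$, and self-duality of $\calK$), and dismisses dual optimality of $y^{\ast}$ as obvious where you spell out the zero-gap/weak-duality argument. Your observation about the orientation of the normal vector is a fair point that the paper leaves implicit (it tacitly takes the outward normal, consistent with the direction $w^{(k)}-\bar{w}^{(k)}$ in Algorithm~\ref{alg:ALPN}), but this does not change the substance of the argument.
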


\begin{proof}
As $H^{\ast}$ is a supporting hyperplane of
$\bar{A} \calK$ at $\bar{A} x^{\ast} = (c^{\top} x^{\ast}, (A x^{\ast})^{\top})^{\top}$
and $x^{\ast}$ solves SOCP~\eqref{eq:SOCPstd}, we have
\begin{equation}
\bar{A} x^{\ast} \in \argmax_{w \in \bar{A} \calK}\
(1, -(y^{\ast})^{\top}) w.
\notag
\end{equation}
Hence, it holds that
\begin{equation}
x^{\ast} \in \argmax_{x \in \calK} (1, -(y^{\ast})^{\top})
\begin{pmatrix}
c^{\top}x\\
Ax
\end{pmatrix}.
\label{dlsec:prop1}
\end{equation}
By the KKT conditions from \eqref{dlsec:prop1},
there exists some $\eta \in \bbR^n$ such that
\begin{equation}
-c + A^{\top} y^{\ast} - \eta = 0,\
\eta \in \calK,\
x^{\ast} \in \calK,\
\eta^{\top} x^{\ast} = 0,
\end{equation}
which, together with $A x^{\ast} = b$, implies Equation~\eqref{dlsec:KKT}.
The optimality of $y^{\ast}$ for SOCP~\eqref{eq:SOCPd} is obvious.
\qed
\end{proof}

Our dual algorithm is described in Algorithm~\ref{alg:DALPN},
where $\{x^{(k)}\}$ and $\{\gamma^{(k)}\}$ represent sequences generated by the ALPN method.

\begin{algorithm}
\caption{Dual adaptive LP-Newton method for SOCP~\eqref{eq:SOCPstd}}
\label{alg:DALPN}
\begin{algorithmic}[1]
\State{Set $k := 0$}
\Loop
	\If{$\gamma^{(k)}-c^{\top}x^{(k)}\neq 0$}
		\State{Set
		\begin{equation}
		y^{(k)} = -\frac{b - A x^{(k)}}{\gamma^{(k)} - c^{\top} x^{(k)}}
		\end{equation}
		}
	\ElsIf{$A x^{(k)} \neq b$}
		\State{\textbf{break}}
		\Comment{The dual problem is unbounded.}
	\Else
		\State{Set $y^{(k)} = 0 \in \bbR^{m}$.}
	\EndIf
	\State{Set $\eta^{(k)} = A^{\top} y^{(k)} - c$.}
	\If{$(x^{(k)}, y^{(k)}, \eta^{(k)})$ satisfies the KKT conditions~\eqref{dlsec:KKT}}
		\State{\Return{} $y^{k}$}
		\Comment{$y^{k}$ is a dual optimum of SOCP~\eqref{eq:SOCPstd}.}
	\EndIf
	\State{$k := k + 1$.}
\EndLoop
\end{algorithmic}
\end{algorithm}

\subsection{Convergence analysis}
In addition to Assumption~\ref{H1}, 
we make the following assumption:

\begin{assumption}
\label{H2}
Slater's constraint qualification holds for SOCP~\eqref{eq:SOCPstd}, i.e., there exists some $z\in \bbR^{n}$ such that
$z \in \interior \calK$ and $A z = b$, and the matrix $A$ is of full row rank.
\end{assumption}

Under Assumptions~\ref{H1} and \ref{H2}, it is guaranteed that
the optimal set of SOCP~\eqref{eq:SOCPd} is nonempty and compact.

\begin{theorem}
Under Assumptions~\ref{H1} and \ref{H2},
the generated sequence $\{y^{(k)}\}$ is bounded and
any accumulation point of $\{y^{(k)}\}$ solves SOCP~\eqref{eq:SOCPd}.
\end{theorem}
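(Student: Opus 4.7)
The overall plan is to read off $y^{(k)}$ as the (rescaled) normal of the supporting hyperplane $H^{(k)}$ of $\bar A\calK_{E^{(k)}}$ at $\bar w^{(k)}$, which is exactly how the algorithm defines it. From this viewpoint, dual feasibility of each $y^{(k)}$ comes for free, the gap between $b^\top y^{(k)}$ and the primal objective $\zeta^{(k)}$ is controlled by $\|b-b^{(k)}\|$, and Assumption~\ref{H2} supplies an a priori bound on $\|y^{(k)}\|$. Once boundedness is in hand, optimality of any accumulation point follows by a short limiting argument.

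First I would record two consequences of the projection step. Because $\bar w^{(k)}$ is the projection of $w^{(k)}$ onto the closed convex cone $\bar A\calK_{E^{(k)}}$ (which contains the origin), the orthogonality $(\bar w^{(k)})^\top(w^{(k)}-\bar w^{(k)})=0$, divided by $\gamma^{(k)}-\zeta^{(k)}>0$, yields the identity $\zeta^{(k)}=(y^{(k)})^\top b^{(k)}$. Combined with the supporting-hyperplane inequality applied to elements of $\calK_{E^{(k)}}$, this gives $(c-A^\top y^{(k)})^\top x\le 0$ for every $x\in\calK_{E^{(k)}}\supseteq\calK$, so self-duality of $\calK$ forces $A^\top y^{(k)}-c\in\calK$; each $y^{(k)}$ is therefore feasible for~\eqref{eq:SOCPd}.

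The main obstacle is boundedness of $\{y^{(k)}\}$, and here Assumption~\ref{H2} must be used fully. Let $z\in\interior\calK$ with $Az=b$; a ball of radius $\epsilon>0$ around $z$ inside $\calK$ together with self-duality gives $u^\top z\ge \epsilon\|u\|$ for every $u\in\calK$. Applied to $u=A^\top y^{(k)}-c$, this converts dual feasibility into
\begin{equation}
b^\top y^{(k)}-c^\top z=(A^\top y^{(k)}-c)^\top z\ge \epsilon\|A^\top y^{(k)}-c\|\ge \epsilon\bigl(\sigma\|y^{(k)}\|-\|c\|\bigr),
\end{equation}
where $\sigma>0$ is the smallest singular value of $A$ (available since $A$ has full row rank). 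The earlier identity gives $b^\top y^{(k)}=\zeta^{(k)}+(b-b^{(k)})^\top y^{(k)}\le \zeta^{(k)}+\|b-b^{(k)}\|\,\|y^{(k)}\|$, and combining the two bounds yields $(\epsilon\sigma-\|b-b^{(k)}\|)\|y^{(k)}\|\le \mathrm{const}$. Since $\zeta^{(k)}\in[\theta^\ast,\gamma^{(0)}]$ by Lemma~\ref{lem:obj} and $\|b-b^{(k)}\|\to 0$ by Lemma~\ref{lem:limb}, the coefficient on the left is at least $\epsilon\sigma/2$ for all large $k$, so $\{y^{(k)}\}$ is bounded.

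Finally, for any accumulation point $y^\ast$ of $\{y^{(k)}\}$, pass to a common subsequence along which also $x^{(k)}\to x^\ast$ (possible by Proposition~\ref{prop:boundx}). Closedness of $\calK$ preserves $A^\top y^\ast-c\in\calK$, so $y^\ast$ is dual feasible, and taking the limit in $b^\top y^{(k)}=\zeta^{(k)}+(b-b^{(k)})^\top y^{(k)}$ together with $\zeta^{(k)}\to\theta^\ast$ from Lemma~\ref{lem:obj} gives $b^\top y^\ast=\theta^\ast$. Strong duality under Assumptions~\ref{H1} and~\ref{H2} then shows that $y^\ast$ solves SOCP~\eqref{eq:SOCPd}.
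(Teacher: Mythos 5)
Your proof is correct, but it follows a genuinely different route from the paper's, mainly in how boundedness of $\{y^{(k)}\}$ is obtained. The paper argues by contradiction: it reads the KKT conditions of the relaxed subproblem to get $-c+A^{\top}y^{(k)}\in\calK_{E^{(k)}}^{\ast}\subseteq\calK$, normalizes an unbounded subsequence, extracts a recession direction $d^{\ast}$ of the dual optimal set, and contradicts the compactness of that set guaranteed by Assumptions~\ref{H1} and \ref{H2}; optimality of accumulation points is then obtained by passing to the limit in the KKT system. You instead establish dual feasibility of each iterate directly from the two projection inequalities for the cone $\bar{A}\calK_{E^{(k)}}$ (which is the same fact as the paper's KKT step, derived more elementarily), and then use the primal Slater point $z$ quantitatively: the bound $u^{\top}z\ge\epsilon\|u\|$ for $u\in\calK$, the smallest singular value of $A$ (full row rank), the identity $\zeta^{(k)}=(y^{(k)})^{\top}b^{(k)}$, and Lemmas~\ref{lem:obj} and \ref{lem:limb} yield an explicit a priori bound $(\epsilon\sigma-\|b-b^{(k)}\|)\|y^{(k)}\|\le\mathrm{const}$, so no contradiction argument or prior knowledge of compactness of the dual optimal set is needed; your limit step then closes the duality gap and invokes strong duality rather than the limiting KKT system. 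What your route buys is an effective, quantitative bound on $\|y^{(k)}\|$ and a more self-contained argument; what the paper's route buys is symmetry with the primal boundedness proof (Proposition~\ref{prop:boundx}) and a conclusion phrased directly in terms of the KKT conditions. Two small points to tighten: the convergence $\zeta^{(k)}\to\theta^{\ast}$ is not a consequence of Lemma~\ref{lem:obj} alone (it only gives convergence to a common limit $\ge\theta^{\ast}$); you should invoke Theorem~\ref{thm:xopt} together with $\zeta^{(k)}=c^{\top}x^{(k)}$ along your chosen subsequence, which you have already set up. Also, your divisions by $\gamma^{(k)}-\zeta^{(k)}$ presuppose the nondegenerate branch $\gamma^{(k)}>\zeta^{(k)}$ of Algorithm~\ref{alg:DALPN}; the paper's proof makes the same implicit assumption (its normal vector $(1,-(y^{(k)})^{\top})^{\top}$ requires it), so this is not a gap relative to the paper, but it is worth a remark.
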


\begin{proof}
We first show the former claim.
To construct a contradiction, suppose that $\{y^{(k)}\}$ is unbounded, and hence 
there exists some subsequence $\{y^{(k)}\}_{k \in S}$ such that $\|y^{(k)}\| \to \infty$ and $y^{(k)} \neq 0$ for any $k \in S$.
Note that $H^{(k)}$ is a supporting hyperplane of $\bar{A} \calK_{E^{(k)}}$ at $\bar{A} x^{(k)}$
that has the normal vector $(1, -(y^{(k)})^{\top})^{\top}$.
Then, by the construction of $x^{(k)}$, we find that
\begin{equation}
\bar{A} x^{(k)} \in \argmax_{w \in \bar{A} \calK_{E^{(k)}}} (1, -(y^{(k)})^{\top})^{\top} w,
\end{equation}
and thus
\begin{equation}
x^{(k)} \in \argmax_{x \in \calK_{E^{(k)}}} c^{\top} x - (y^{(k)})^{\top} A x.
\end{equation}
Under the KKT conditions and the definition of $x^{(k)}$,
we have
\begin{equation}
-c + A^{\top} y^{(k)} \in \calK_{E^{(k)}}^{\ast},\
x^{(k)} \in \calK_{E^{(k)}},\
(-c + A^{\top} y^{(k)})^{\top} x^{(k)} = 0,\
A x^{(k)} = b^{(k)},
\label{secdual:thm:KKT}
\end{equation}
where $\calK_{E^{(k)}}^{\ast}$ denotes the dual cone of $\calK_{E^{(k)}}$.
As $\calK_{E^{(k)}}^{\ast} \subseteq \calK^{\ast} = \calK$ follows from $\calK \subseteq \calK_{E^{(k)}}$,
we find that $-c + A^{\top} y^{(k)} \in \calK$.
Divide $-c + A^{\top} y^{(k)} \in \calK$ and
$(-c + A^{\top} y^{(k)})^{\top} x^{(k)} = 0$ by $\|y^{(k)}\|$ and let $k \in S \to \infty$ in Equation~\eqref{secdual:thm:KKT}.
Choose an accumulation point of $\{y^{(k)}/\|y^{(k)}\|\}$ and denote it by $d^{\ast}$.
Let $x^{\ast}$ be an accumulation point of $\{x^{(k)}\}$ (recall Theorem~\ref{thm:xopt}).
Without loss of generality, we may assume that $\lim_{k \in S \to \infty} (x^{(k)}, y^{(k)} / \|y^{(k)}\|) = (x^{\ast}, d^{\ast})$.
Then, noting that $(-c + A^{\top} y^{(k)}) / \|y^{(k)}\| \in \calK$ for any $k \in S$ and $\lim_{k \to \infty} b^{(k)} = b$ by Lemma~\ref{lem:limb},
it holds that
\begin{equation}
A^{\top} d^{\ast} \in \calK,\
x^{\ast} \in \calK,\
(A x^{\ast})^{\top} d^{\ast} = 0,\
A x^{\ast} = b.
\label{secdual:thm:eq1}
\end{equation}
Here, let $\bar{y}$ be an arbitrary optimum of SOCP~\eqref{eq:SOCPd}.
Then, the set $\Omega := \{\bar{y} + s d^{\ast}: s \ge 0\}$ is contained by the optimal solution set of SOCP~\eqref{eq:SOCPd},
denoted by $\mathcal{S}_{\mathrm{opt}}^{\mathrm{D}}$.
This is shown as follows.
Fix an arbitrary value of $s \ge 0$.
Using the first relation in \eqref{secdual:thm:eq1} and $-c + A^{\top} \bar{y} \in \calK$,
we have $-c + A^{\top} (\bar{y} + s d^{\ast}) \in \calK$,
and so $\bar{y} + s d^{\ast}$ is feasible for SOCP~\eqref{eq:SOCPd}.
Moreover, it can be deduced from \eqref{secdual:thm:eq1} that
\begin{equation}
b^{\top} (\bar{y} + s d^{\ast})
= b^{\top} \bar{y} + s (A x^{\ast})^{\top} d^{\ast}
= b^{\top} \bar{y},
\end{equation}
which indicates that the optimal value of SOCP~\eqref{eq:SOCPd} is also attained at $\bar{y} + s d^{\ast}$.
Therefore, $\Omega \subseteq \mathcal{S}_{\mathrm{opt}}^{\mathrm{D}}$.
Note that $\Omega$ is unbounded because $\|d^{\ast}\| \neq 0$,
which implies the unboundedness of $\mathcal{S}_{\mathrm{opt}}^{\mathrm{D}}$.
However, this contradicts the boundedness of $\mathcal{S}_{\mathrm{opt}}^{\mathrm{D}}$ derived from Assumptions~\ref{H1} and \ref{H2}.
Hence, $\{y^{(k)}\}$ is bounded.

The second part of the claim is easy to prove by taking the limit in Equation~\eqref{secdual:thm:KKT} with the first relation
replaced by $-c + A^{\top} y^{(k)} \in \calK$.
\qed
\end{proof}

\section{Numerical Results}
\label{sect: expr}


We conducted numerical experiments to verify the performance of our proposed algorithm.
We implemented the ALPN method with MATLAB R2018a (9.4.0.813654)
on a workstation running CentOS release 6.10 with eight Intel Xeon CPUs (E3-1276 v3 3.60~GHz) and 32~GB RAM.

We used an initial polyhedral approximation of the $i$-th block of $\calK$ with $n_{i} \ge 2$ given by
\begin{equation}
E_{i}^{(0)} := \{\pm e_{j} \in \bbR^{n_{i} - 1}: j = 1, 2, \dots, n_{i} - 1\},
\end{equation}
where $e_{j}$ denotes the $j$-th column of the identity matrix.
Note that $E_{i}^{(0)}$ defined above exactly represents $\calK_{i}$ if $n_{i} = 1, 2$.
In the projection step, we solved Problem~\eqref{eq: projection} using the MATLAB function \texttt{lsqlin}.
We stopped the algorithm when an approximate primal optimal solution was found,
namely, a primal solution~$x^{(k)}$ at the $k$-th iteration satisfies
\begin{equation}
\max\{\|A x^{(k)} - b\|, \max_{i = 1}^{p} \|\bar{x}^{i, (k)}\| - x_{1}^{i, (k)}\} \le 10^{-4}.
\end{equation}

We randomly generated the following instances of SOCP~\eqref{eq:SOCPstd}.
First, we set $m$, $p$, and $(n_{1}, n_{2}, \dots, n_{p})$
and randomly generated each element of $A$ from the standard Gaussian distribution.
Next, we set $b = A \tilde{x}$ and $c = A^{\top} e - \tilde{s}$,
where $e$ is the vector whose elements are all ones and
\begin{equation}
\tilde{x}^{i} = \tilde{s}^{i} := e_{1} \in \bbR^{n_{i}}\ (i = 1, 2, \dots, p).
\end{equation}
Note that the two points~$\tilde{x}$ and $\tilde{s}$
are interior feasible solutions of the primal and dual problems, respectively.

\subsection{Performance of the Adaptive LP-Newton method}

\begin{table}
\scriptsize
\caption{Performance of the Adaptive LP-Newton method.}
\label{tbl: expr}
\centering
\begin{tabular}{cccrrrr}
\toprule
\multicolumn{3}{c}{\# dimensions}&	&	&	\multicolumn{2}{c}{\# hyperplanes}\\
\cmidrule(rl){1-3}
\cmidrule(rl){6-7}
$m$&	$n$&	$(n_{1}, n_{2}, \dots, n_{p})$&
\multicolumn{1}{c}{time [s]}&	\multicolumn{1}{c}{\# iter}&
\multicolumn{1}{c}{initial}&	\multicolumn{1}{c}{final}\\
\midrule
$10$&	$200$&	$(1,1,\dots,1)$&	    0.1&	    3.5&	  200.0&	  200.0\\
$10$&	$200$&	$(2,2,\dots,2)$&	    0.3&	    3.8&	  200.0&	  385.5\\
$10$&	$200$&	$(5,5,\dots,5)$&	    7.3&	   19.1&	  320.0&	 1019.4\\
$10$&	$200$&	$(10,10,\dots,10)$&	   58.4&	   62.5&	  360.0&	 1580.9\\
$10$&	$200$&	$(20,20,\dots,20)$&	  167.6&	  141.1&	  380.0&	 1779.4\\
$10$&	$200$&	$(100,100)$&	  136.8&	  352.0&	  396.0&	 1098.0\\
$10$&	$200$&	$200$&	   48.1&	  274.4&	  398.0&	  671.4\\
\midrule
$10$&	$350$&	$(1,1,\dots,1)$&	    0.1&	    3.6&	  350.0&	  350.0\\
$10$&	$350$&	$(2,2,\dots,2)$&	    0.9&	    3.5&	  350.0&	  592.8\\
$10$&	$350$&	$(5,5,\dots,5)$&	   34.0&	   19.2&	  560.0&	 1786.6\\
$10$&	$350$&	$(10,10,\dots,10)$&	  303.0&	   64.7&	  630.0&	 2849.2\\
$10$&	$350$&	$(35,35,\dots,35)$&	 1911.0&	  281.9&	  680.0&	 3489.0\\
$10$&	$350$&	$(175,175)$&	  462.4&	  414.7&	  696.0&	 1523.4\\
$10$&	$350$&	$350$&	  104.4&	  217.3&	  698.0&	  914.3\\
\midrule
$10$&	$500$&	$(1,1,\dots,1)$&	    0.1&	    3.8&	  500.0&	  500.0\\
$10$&	$500$&	$(2,2,\dots,2)$&	    2.0&	    3.5&	  500.0&	  805.7\\
$10$&	$500$&	$(5,5,\dots,5)$&	   98.0&	   19.9&	  800.0&	 2638.5\\
$10$&	$500$&	$(10,10,\dots,10)$&	  856.6&	   66.0&	  900.0&	 4135.5\\
$10$&	$500$&	$(50,50,\dots,50)$&	 5618.0&	  372.5&	  980.0&	 4694.8\\
$10$&	$500$&	$(250,250)$&	  866.0&	  398.1&	  996.0&	 1790.1\\
$10$&	$500$&	$500$&	  183.6&	  172.7&	  998.0&	 1169.7\\
\midrule
$50$&	$200$&	$(1,1,\dots,1)$&	    0.0&	    4.0&	  200.0&	  200.0\\
$50$&	$200$&	$(2,2,\dots,2)$&	    0.3&	    4.0&	  200.0&	  461.8\\
$50$&	$200$&	$(5,5,\dots,5)$&	    5.2&	   16.8&	  320.0&	  947.6\\
$50$&	$200$&	$(10,10,\dots,10)$&	   26.3&	   47.9&	  360.0&	 1297.9\\
$50$&	$200$&	$(20,20,\dots,20)$&	   67.3&	  105.3&	  380.0&	 1423.0\\
$50$&	$200$&	$(100,100)$&	   62.7&	  257.1&	  396.0&	  908.2\\
$50$&	$200$&	$200$&	   30.5&	  238.1&	  398.0&	  635.1\\
\midrule
$50$&	$350$&	$(1,1,\dots,1)$&	    0.1&	    4.0&	  350.0&	  350.0\\
$50$&	$350$&	$(2,2,\dots,2)$&	    1.3&	    3.9&	  350.0&	  752.3\\
$50$&	$350$&	$(5,5,\dots,5)$&	   27.3&	   17.2&	  560.0&	 1690.5\\
$50$&	$350$&	$(10,10,\dots,10)$&	  184.6&	   54.4&	  630.0&	 2499.0\\
$50$&	$350$&	$(35,35,\dots,35)$&	  675.8&	  200.5&	  680.0&	 2674.9\\
$50$&	$350$&	$(175,175)$&	  235.7&	  308.3&	  696.0&	 1310.6\\
$50$&	$350$&	$350$&	   64.0&	  164.6&	  698.0&	  861.6\\
\midrule
$50$&	$500$&	$(1,1,\dots,1)$&	    0.1&	    3.9&	  500.0&	  500.0\\
$50$&	$500$&	$(2,2,\dots,2)$&	    3.2&	    3.8&	  500.0&	 1026.4\\
$50$&	$500$&	$(5,5,\dots,5)$&	   80.1&	   17.9&	  800.0&	 2487.1\\
$50$&	$500$&	$(10,10,\dots,10)$&	  548.5&	   56.4&	  900.0&	 3669.7\\
$50$&	$500$&	$(50,50,\dots,50)$&	 2287.4&	  274.9&	  980.0&	 3719.0\\
$50$&	$500$&	$(250,250)$&	  392.6&	  281.0&	  996.0&	 1556.0\\
$50$&	$500$&	$500$&	  125.5&	  140.3&	  998.0&	 1137.3\\
\midrule
$100$&	$200$&	$(1,1,\dots,1)$&	    0.0&	    4.0&	  200.0&	  200.0\\
$100$&	$200$&	$(2,2,\dots,2)$&	    0.3&	    4.0&	  200.0&	  481.9\\
$100$&	$200$&	$(5,5,\dots,5)$&	    3.8&	   14.9&	  320.0&	  875.7\\
$100$&	$200$&	$(10,10,\dots,10)$&	   13.6&	   38.1&	  360.0&	 1102.0\\
$100$&	$200$&	$(20,20,\dots,20)$&	   30.3&	   78.4&	  380.0&	 1154.0\\
$100$&	$200$&	$(100,100)$&	   35.3&	  195.4&	  396.0&	  784.8\\
$100$&	$200$&	$200$&	   24.5&	  211.5&	  398.0&	  608.5\\
\midrule
$100$&	$350$&	$(1,1,\dots,1)$&	    0.1&	    4.0&	  350.0&	  350.0\\
$100$&	$350$&	$(2,2,\dots,2)$&	    1.5&	    4.0&	  350.0&	  828.4\\
$100$&	$350$&	$(5,5,\dots,5)$&	   24.1&	   16.6&	  560.0&	 1645.8\\
$100$&	$350$&	$(10,10,\dots,10)$&	  114.6&	   46.9&	  630.0&	 2236.5\\
$100$&	$350$&	$(35,35,\dots,35)$&	  397.4&	  166.1&	  680.0&	 2331.0\\
$100$&	$350$&	$(175,175)$&	  159.0&	  257.8&	  696.0&	 1209.6\\
$100$&	$350$&	$350$&	   87.9&	  212.3&	  698.0&	  909.3\\
\midrule
$100$&	$500$&	$(1,1,\dots,1)$&	    0.1&	    4.0&	  500.0&	  500.0\\
$100$&	$500$&	$(2,2,\dots,2)$&	    5.9&	    4.0&	  500.0&	 1195.9\\
$100$&	$500$&	$(5,5,\dots,5)$&	   72.2&	   17.0&	  800.0&	 2399.6\\
$100$&	$500$&	$(10,10,\dots,10)$&	  388.0&	   50.2&	  900.0&	 3360.0\\
$100$&	$500$&	$(50,50,\dots,50)$&	 1286.0&	  223.6&	  980.0&	 3206.0\\
$100$&	$500$&	$(250,250)$&	  399.2&	  290.1&	  996.0&	 1574.2\\
$100$&	$500$&	$500$&	  120.4&	  139.4&	  998.0&	 1136.4\\
\bottomrule
\end{tabular}
\end{table}

For the ALPN method, Table~\ref{tbl: expr} presents the average runtime,
number of iterations, and number of hyperplanes in the initial and final approximations of the SOC over ten executions.
From this table, we can make the following observations:
\begin{itemize}
	\item When $\calK$ is \textit{polyhedral-like,} i.e., $p \approx n$ and $n_{i} \approx 1$ for all $i = 1, 2, \dots, p$,
	the ALPN method works well.
	The algorithm gives a good polyhedral approximation of $\calK$ with a small number of hyperplanes.
	As a result, there are few iterations and the computation time is short.
	\item When $\calK$ is \textit{medium-dimensional,} i.e., $1 \ll p \ll n$ and $1 \ll n_{i} \ll n$ for all $i = 1, 2, \dots, p$,
	the ALPN method becomes slow,
	although it gets better
	when $\calK$ is \textit{high-dimensional,} i.e., $p \approx 1$ and $n_{i} \approx n$ for all $i = 1, 2, \dots, p$.
	The medium-dimensional $\calK$ requires many hyperplanes to obtain a good polyhedral approximation.
	\item The total dimension~$n$ of the variables seems to be positively correlated with the runtime, 
	although the runtime of the original LP-Newton method for LP is almost independent of $n$~\cite{FHYZ09}.
	This difference arises from the solution methods of the minimum norm point step.
	In our implementation, we solve Problem~\eqref{eq: projection} using the MATLAB function \texttt{lsqlin},
	for which the computation time depends on $n$.
	\item Surprisingly, the number~$m$ of linear constraints is \textit{negatively} correlated with the runtime.
	This might be because the dimension of the feasible region is low for large values of $m$,
	and this region can then be approximated by a small number of hyperplanes.
\end{itemize}

\subsection{Comparison with the primal-dual interior-point method}

\begin{table}
\caption{Comparison with the primal-dual interior-point method.}
\label{tbl: comparison}
\centering
\begin{tabular}{cccrr}
\toprule
\multicolumn{3}{c}{\# dimensions}&	\multicolumn{2}{c}{time [s]}\\
\cmidrule(rl){1-3}
\cmidrule(rl){4-5}
$m$&	$n$&	$(n_{1}, n_{2}, \dots, n_{p})$&
\multicolumn{1}{c}{ALPN}&	\multicolumn{1}{c}{SDPT3}\\
\midrule
$1400$&	$1500$&	$(3,3,\dots,3)$&	  177.3&	  366.6\\
$1700$&	$1800$&	$(3,3,\dots,3)$&	  260.4&	  638.4\\
$2000$&	$2100$&	$(3,3,\dots,3)$&	  363.4&	  970.0\\
\bottomrule
\end{tabular}
\end{table}

We also compared our proposed ALPN method with the primal-dual interior-point method.
In this experiment, we solved the randomly generated instances using
our implementation of ALPN and SDPT3~\cite{TTT03},
which is a MATLAB implementation of the primal-dual interior-point method.
Basically, SDPT3 was found to be faster than the ALPN method.
However, the computation time of SDPT3 increases with $m$, whereas that of ALPN decreases as $m$ and $p$ increase.
For instances with large values of $m$ and $p$, ALPN outperformed SDPT3.
The results are presented in Table~\ref{tbl: comparison},
which shows the average runtime of the ALPN and SDPT3 methods over ten runs.

\section{Conclusions}
In this paper, we have developed an LP-Newton method for SOCP
through a transformation into LSIP with an infinite number of linear inequality constraints.
The proposed ALPN algorithm
produces a sequence by sequentially projecting the current point onto a polyhedral cone
arising from finitely many linear inequality constraints chosen from the constraints of the LSIP.
We also proposed a dual algorithm for the ALPN method for solving the dual of the SOCP.
Under some mild assumptions, 
we proved that arbitrary accumulation points of the sequences generated by the two proposed algorithms are optima of the SOCP and its dual.
Finally, we conducted some numerical experiments and compared the performance of our algorithms with that of the primal-dual interior point method.
Future work will consider the extension of the ALPN method to semi-definite programming problems or symmetric cone programming problems.


\bibliographystyle{spmpsci}      
\bibliography{adaptiveLPNewton}   
\end{document}